\newtheorem{theorem}{Theorem}[section]
\newtheorem{lemma}[theorem]{Lemma}
\newtheorem{corollary}[theorem]{Corollary}
\numberwithin{equation}{section}
\theoremstyle{definition}
\theoremstyle{remark}
\newtheorem{remark}[theorem]{Remark}
\newcommand{\brac}[1]{\left(#1\right)}
\newcommand{\bk}{{\boldsymbol{k}}}
\newcommand{\br}{{\boldsymbol{r}}}
\newcommand{\bs}{{\boldsymbol{s}}}
\newcommand{\bx}{{\boldsymbol{x}}}
\newcommand{\rd}{{\rm d}}
\newcommand{\sign}{{\rm sign\,}} 
\def\ZZd{{\mathbb Z}^d}
\def\IId{{\mathbb I}^d}
\def\ZZ{{\mathbb Z}}
\def\RR{{\mathbb R}}
\def\RRd{{\mathbb R}^d}
\def\NN{{\mathbb N}}
\def\NNd{{\NN}^d}
\def\NN{{\mathbb N}}
\def\RR{{\mathbb R}}
\def\IId{{\mathbb I}^d}
\def\NNd{{\mathbb N}^d}
\def\RRd{{\mathbb R}^d}
\def\ZZd{{\mathbb Z}^d}
\def\Hh{{\mathcal H}}
\def\Ll{{\mathcal L}}
\def\ZZ{{\mathbb Z}}
\def\NN{{\mathbb N}}
\def\RR{{\mathbb R}}
\def\IId{{\mathbb I}^d}
\def\NNd{{\mathbb N}^d}
\def\RRd{{\mathbb R}^d}
\def\Wpmix{W^\alpha_p(\IId)}
\def\Wpgamma{W^\alpha_p(\RRd,\gamma)}
\def\Wap{W^\alpha_p}
\def\Wa{W^\alpha_2(\RRd,\gamma)}
\def\Lqgamma{L_q(\RRd,\gamma)}
\newcommand{\norm}[2]{\left\|{#1}\right\|_{#2}}
\title{\sffamily {Pseudo s-Numbers of Embeddings of Gaussian Weighted Sobolev Spaces}}
\author{Van Kien Nguyen}
\affil{Department of Mathematical Analysis, University of Transport and Communications
	\protect\\	No.3 Cau Giay Street, Lang Thuong Ward, Dong Da District,
	Hanoi, Vietnam
	\protect\\
	Email: kiennv@utc.edu.vn}
\date{\today}
\begin{document}
\maketitle

\begin{abstract}
In this paper, we study the approximation problem for functions in the Gaussian-weighted Sobolev space  $W^\alpha_p(\mathbb{R}^d, \gamma)$ of  mixed smoothness $\alpha \in \mathbb{N}$ with error measured in the Gaussian-weighted space $L_q(\mathbb{R}^d, \gamma)$. We obtain the exact asymptotic order of pseudo $s$-numbers for the cases $1 \leq q< p < \infty$ and $p=q=2$. Additionally, we also obtain an upper bound  and a lower bound for pseudo $s$-numbers of the embedding of $W^\alpha_2(\mathbb{R}^d, \gamma)$ into $L_{\infty}^{\sqrt{g}}(\RRd)$. Our result is an extension of that obtained in Dinh Dũng and Van Kien Nguyen (IMA Journal of Numerical Analysis, 2023) for approximation and Kolmogorov numbers.  
	
	\medskip
	\noindent
	{\bf Keywords and Phrases}:  Gaussian-weighted Sobolev space of mixed smoothness; pseudo $s$-numbers; Asymptotic order of convergence. 
	
	\medskip
	\noindent
	{\bf MSC (2020)}:   41A25; 41A46, 46E35
	
\end{abstract}

\section{Introduction}
 \label{Introduction}

Recently  there  is  an  increasing  interest  in  the  study  of multivariate numerical integration and    approximation   in  the  context of Gaussian-weighted Sobolev  spaces of mixed smoothness \cite{IKLP2015, IL2015,DILP18,DN23}. This  is motivated by  the  fact 
that a number of real-world problems in finance, physics, quantum chemistry and machine learning are modeled on function 
spaces on high-dimensional domains, and often  the  functions  to be integrated or approximated have some Sobolev regularity equipped Gaussian measure.

In recent paper \cite{DN23}, Dinh Dũng and the author studied the 
linear  approximation and sampling recovery for  functions from Gaussian-weighted  Sobolev spaces  $\Wpgamma$ in the Gaussian-weighted space $L_q(\RRd,\gamma)$. The asymptotic optimality in terms 
of the Kolmogorov, linear and sampling numbers of the approximation was obtained for the cases $1\le q < p <\infty$ and $p=q=2$. The problem of multivariate numerical integration for functions in $\Wpgamma$ has been studied in \cite{IKLP2015, IL2015,DILP18,DN23}. An asymptotically optimal quadrature of numerical integration was constructed recently in \cite{DN23} based on an assembling method.

In  the  present  paper we  deal with  the approximation  problem. We study the asymptotic behavior of pseudo $s$-numbers of the embedding of $W^\alpha_p(\mathbb{R}^d, \gamma)$ into  $L_q(\mathbb{R}^d, \gamma)$. Our 
approach is based on the decomposition technique developed from \cite{DN23} and on tools from the theory of pseudo $s$-numbers, see \cite{Pie80B}.  We show that in the cases $1 \leq q< p < \infty$ and $p=q=2$, the  exact asymptotic order of  pseudo $s$-numbers can be obtained. In particular, if $1\leq q<p<\infty$, for injective and additive pseudo $s$-numbers we get
$$
s_n\big(I_\gamma:\Wpgamma\to L_q(\RRd,\gamma)\big) \asymp s_n\big(I:\tilde{W}_p^\alpha(\IId) \to \tilde{L}_q(\IId)\big),\ \ n\to \infty\,.
$$
Here $I$ is the embedding from periodic Sobolev space with mixed smoothness $\tilde{W}_p^\alpha(\IId)$ into the Lebesgue space $\tilde{L}_q(\IId)$ on the torus $\IId:=\big[-\frac{1}{2},\frac{1}{2}\big]^d$.

Additionally, in this paper we also obtain an upper bound and a lower bound for asymptotic behavior of pseudo $s$-numbers of the embedding of $W^\alpha_2(\mathbb{R}^d, \gamma)$ into $L_{\infty}^{\sqrt{g}}(\RRd)$. Note here that we do not have  a continuous embedding of $\Wpgamma$ into $L_\infty(\RRd,\gamma)$ if $1\leq p<  \infty$.  The space $L_{\infty}^{\sqrt{g}}(\RRd)$ is the collection of all functions $f$ on $\RRd$ such that $\big|f(\bx)\sqrt{g(\bx)}\big|$ is bounded. Here $g(\bx)$ is the density of the standard Gaussian measure on $\RRd$.  
In this context we obtain
\begin{equation*}
	\begin{aligned}
		n^{-\frac{\alpha}{2}-\frac{d}{4}} (\log n)^{(\frac{\alpha}{2}+\frac{d}{4})(d-1)} & \ll
		s_n\big(I_\gamma: W_2^\alpha(\RRd,\gamma) \to L_{\infty}^{\sqrt{g}}(\RRd)\big)
		\\
		&\ll n^{-\frac{\alpha}{2}-\frac{1}{12}+\frac{1}{2}}(\log n)^{(\frac{\alpha}{2}+\frac{1}{12})(d-1)}\,,\qquad \ n\to \infty.
	\end{aligned}
\end{equation*}

The paper is organized as follows. In Section \ref{sec-pseudo}, we recall the notion of pseudo $s$-numbers and some particular pseudo $s$-numbers.  Section \ref{Approximation} is devoted to the proof of the asymptotic order of pseudo $s$-numbers of the embedding $I_\gamma$  for the case $1\leq q < p<\infty$. In Section \ref{sec-p=2}, we study approximation problem for functions in $W_2^\alpha(\RRd,\gamma)$ with error measured in $L_2(\RRd,\gamma)$ or $L_\infty^{\sqrt{g}}(\RRd)$.

\noindent
{\bf Notation.} 
The letter $d$ is always reserved for
the underlying dimension of $\RR^d$, $\NN^d$, etc. Vectors in $\RRd$  are denoted by boldface
letters. For $\bx \in \RR^d$, $x_i$ denotes the $i$th coordinate, i.e., $\bx := (x_1,\ldots, x_d)$.  For a real number $a$ we denote by $\lfloor a \rfloor$ the greatest integer not larger than $a$.
For the quantities $A_n$ and $B_n$ depending on 
$n$ in an index set $J$  
we write  $A_n \ll B_n$  
if there exists some constant $C >0$ independent of $n$ such that 
$A_n \leq CB_n$ for all $n \in J$, and  
$A_n \asymp B_n$ if $A_n  \ll B_n $
and $B_n  \ll A_n $. General positive constant (may depend on parameters) is denoted by $C$. Values of constant $C$ is not specified and may be different in various places. 
\section{Pseudo s-numbers}\label{sec-pseudo}
Let us first recall the definition of pseudo $s$-numbers following Pietsch \cite[Section 12.1.1]{Pie80B}. In the paper \cite{Pie74}, see also \cite[Chapter 11]{Pie80B}, Pietsch introduced the notion of $s$-number. However, dyadic entropy numbers, see the definition below, do not belong to the class of $s$-numbers. To incorporate also dyadic entropy numbers into the framework, Pietsch introduced the notion of pseudo $s$-numbers. 

Let $X,Y,X_0,Y_0$ be Banach spaces.
A pseudo $s$-number is a map $s$ assigning to every linear operator $T\in \mathcal L(X,Y)$ a scalar sequence $(s_n(T))_{n\in \NN}$ such that the following conditions are satisfied:
\begin{enumerate}
	\item[(a)] $\|T\|=s_1(T)\geq s_2(T)\geq\ldots\geq 0 $;
	\item[(b)]$ s_{n}(S+T)\leq s_n(S)+ \|T\| $ for all $S\in \mathcal L(X,Y)$ and $m,n\in \NN\, $;
	\item[(c)] $s_n(BTA)\leq \|B\| \, \cdot \, s_n(T) \, \cdot \, \|A\|$ for all $A\in \mathcal L(X_0,X)$, $B\in \mathcal L(Y,Y_0)$\,.
\end{enumerate}
 A pseudo $s$-number is called additive if it satisfies
 \begin{enumerate}
 	\item[(b')]$ s_{n+m-1}(S+T)\leq s_n(S)+ s_m(T) $ for all $S\in \mathcal L(X,Y)$ and $m,n\in \NN\, $.
 \end{enumerate}
Some of the popular pseudo $s$-numbers are listed below:
	\begin{enumerate}
\item [(i)] The $n$th approximation number of the linear operator $T$ is defined as
$$
a_n(T):=\inf\{\|T-A\|: \ A\in \mathcal L(X,Y),\ \ \text{rank} (A)<n\}\, , \qquad n \in \NN\, . 
$$
\item [(ii)] The $n$th Kolmogorov number of the linear operator $T \in \mathcal L(X,Y)$ is defined as 
\begin{equation*}
	d_n(T)= \inf_{L_{n-1}}\sup_{\|x\|_X\leq 1}\inf_{y\in L_{n-1}}\|Tx-y\|_Y. \label{def1}
\end{equation*}
Here the outer supremum is taken over all linear subspaces  $L_{n-1}$ of dimension ($n-1$)  in $Y$. 
\item [(iii)] The $n$th Gelfand number of the linear operator $T \in \mathcal L(X,Y)$ is defined as 
$$ c_n (T) := \inf\Big\{\|\, T\, J_M^X\, \|: \ {\rm codim\,}(M)< n\Big\},$$
where $J_M^X:M\to X$ refers to the canonical injection of $M$ into $X$. 
\item [(iv)] The $n$th Weyl number of $T$ is defined as 
		$$ x_n(T):=\sup\{a_n(TA):\ A\in \mathcal L(\ell_2,X),\ \|A\|\leq 1\}\, , \qquad n \in \NN\, .$$
	\item [(v)] The $n$th (dyadic) entropy
		number of $T$ is defined as
		$$
		e_n (T):=\inf\{ \varepsilon
		>0: T(B_X) \text{ can be covered by } 2^{n-1}
		\text{ balls in } Y \text{ of radius } \varepsilon\}\, ,
		$$
		where $B_X:= \{x \in X: \: \|x\|_X \le 1\}$ denotes the closed
		unit ball of $X$.
	\item [(vi)]  The $n$th Bernstein number of the operator $T\in \Ll(X,Y)$  is defined as
	$$ b_n(T)= \sup_{L_n}\inf_{\substack{x\in L_n
			\\ x\not =0}} \dfrac{\|Tx\|_Y}{\| x\|_X} ,$$
	where the supremum is taken over all subspaces $L_n$ of $X$ with dimension $n$.
\end{enumerate}
Note that approximation, Kolmogorov, Gelfand, Weyl and entropy numbers are additive pseudo $s$-numbers, see \cite[Chapters 11 and 12]{Pie80B}. Bernstein number is not an additive pseudo $s$-number \cite{Pie08}.

It is well-known that approximation number is the largest pseudo $s$-number in the set $\{a,b,c,d,x\}$, see \cite{Pie74,CaSt90B,Pie80B}. We also have the following inequalities
\begin{equation}\label{eq-inequality1}
b_n(T)\leq \min\{c_n(T),d_n(T)\}, 
\end{equation}
see \cite{Pie74} and
\begin{equation}\label{eq-inequality2}
 e_n(T)\leq a_n(T),\ \ \ b_n(T)\leq  2\sqrt{2}e_n(T).
\end{equation}
The first inequality in \eqref{eq-inequality2} can be found in  \cite{CaSt90B}, the second one was proved in \cite{Ng16}. Moreover, if $x_n(T)\asymp n^{-a}(\log n)^{b}$ with $a>0,\ b\geq 0$ then 
\begin{equation}\label{eq-inequality3}
b_n(T)\leq Cx_n(T)
\end{equation}
which is a consequence of \cite[Lemma 2]{Pie08}. For a proof, we refer the reader to  \cite[Corollary 3.2]{Ng15}.
Similarly, if $e_n(T)\asymp n^{-a}(\log n)^{b}$ we get
\begin{equation}\label{eq-inequality4}
	e_n(T)\leq Cd_n(T),
\end{equation}
see \cite{Tem98}. 

We recall that a metric injection $J$ of a Banach space $Y$ into a Banach space 
$\tilde{Y}$ is characterized by the property 
$$
\|Jy\|_{\tilde{Y}}=\|y\|_Y,\ \text{for all } y\in Y.
$$
A  pseudo $s-$number  is called weakly injective if,  given any metric  injection  $J\in \Ll(Y,\tilde{Y})$ we have
\begin{equation}\label{eq-injective}
s_n(T)\asymp s_n(JT).
\end{equation}
In case $s_n(T)= s_n(JT)$ it is called injective. We know that Gelfand, Bernstein, and Weyl numbers are injective pseudo $s$-numbers, while entropy number is weakly injective. Approximation and Kolmogorov numbers are not injective. 
 \section{The case $1\leq q<p<\infty$}
 \label{Approximation}
 In this section, we study  the asymptotic behavior of pseudo $s$-numbers of the embedding of Gaussian-weighted  Sobolev spaces $W^\alpha_p(\mathbb{R}^d, \gamma)$ of  mixed smoothness $\alpha \in \mathbb{N}$ into the Gaussian-weighted space $L_q(\mathbb{R}^d, \gamma)$ in the case $1 \leq q< p < \infty$. 
 
 Let us first introduce Gaussian-weighted Sobolev spaces of mixed smoothness.	
 Denote 
 $$
 g(\bx):=(2\pi)^{-d/2} \exp\brac{-|\bx|^2/2},\ \ \bx\in \RRd 
 $$
 and   $\gamma (\rd \bx) := g(\bx) \rd\bx$  the $d$-dimensional standard Gaussian measure on $\RRd$  with the density $g(\bx)$. For  $1\leq p<\infty$ and  a Lebesgue measurable set $\Omega\subset\RRd$, we define the Gaussian-weighted space  $L_p(\Omega,\gamma)$ to be the set of all functions $f$ on $\Omega$ such that the norm
	$$
	\|f\|_{L_p(\Omega,\gamma)} : = \bigg( \int_\Omega |f(\bx)|^p \gamma(\rd \bx)\bigg)^{1/p}
	=
	\bigg( \int_\Omega |f(\bx)|^p g(\bx) \rd \bx\bigg)^{1/p} \ 
	$$
	is finite. 
	For $\alpha \in \NN$, we define the Gaussian-weighted  space $\Wap(\Omega,\gamma)$ to be the normed space of all functions $f\in L_p(\Omega,\gamma)$ such that the generalized partial
	 derivative $D^\br f$ of order $\br$  belongs to $L_p(\Omega,\gamma)$ for all $\br\in \NN_0^d$ satisfying $|\br|_\infty\leq \alpha$. The norm of a  function $f$ in this space 
	is defined by
	\begin{align} \label{W-Omega}
		\|f\|_{\Wap(\Omega,\gamma)}: = \Bigg(\sum_{|\br|_\infty \leq \alpha} \|D^\br f\|_{L_p(\Omega,\gamma)}^p\Bigg)^{1/p}.
	\end{align}
	The space $\Wap(\Omega)$ is defined  as the classical Sobolev space with mixed smoothness by replacing $L_p(\Omega,\gamma)$ with $L_p(\Omega)$ in \eqref{W-Omega}, where as usual,   $L_p(\Omega)$ denotes the Lebesgue space of functions on $\Omega$ equipped with the usual $p$-integral norm.



For a fixed   $\theta>1 $ we denote the $d$-cube $\IId_\theta$ by $\IId_\theta := \big[-\frac{\theta}{2}, \frac{\theta}{2}\big]^d$, 
$\IId_{\theta,\bk}:=\bk+\IId_\theta$  for $\bk \in \ZZd$, and
$ f_{\theta,\bk} $ the restriction of $f$ on $\IId_{\theta,\bk}$  for a function $f$ on $\RRd$.  Let  $(\varphi_\bk)_{\bk \in \ZZd}$ be a partition of unity on $\RRd$ satisfying  
\begin{itemize}
	\item[\rm{(i)}] $\varphi_\bk \in C^\infty_0(\RRd)$ and 
	$0 \le \varphi_\bk (\bx)\le 1$, \ \ $\bx \in \RRd$, \ \ $\bk \in \ZZd$;
	\item[\rm{(ii)}] the support of $\varphi_\bk$ is contained in the interior of  $\IId_{\theta,\bk}$, $\bk \in \ZZd$;
	\item[\rm{(iii)}]  $\sum_{\bk \in \ZZd}\varphi_\bk (\bx)= 1$, \ \ $\bx \in \RRd$;
	\item[\rm{(iv)}]  $\norm{\varphi_\bk }{W^\alpha_p(\IId_{\theta,\bk})} \le C_{\alpha,d,\theta}$, \ \ 
	$\bk \in \ZZd$.
\end{itemize}
For a construction of such a system, we refer the reader to  \cite[Chapter VI, 1.3]{Stein1970}.
Observe that if $f \in \Wpgamma$, then we have
\begin{equation*}\label{f_theta,bk}
f_{\theta,\bk}(\cdot+\bk)\varphi_\bk(\cdot+\bk)  \in  {W}^\alpha_p(\IId_{\theta}),
\end{equation*}
and it holds from algebra property of ${W}^\alpha_p(\IId_\theta)$, see \cite{NgS17}, that
 \begin{equation}\label{eq-first}  
	\begin{aligned} 
		\|f_{\theta,\bk}(\cdot+\bk)\varphi_\bk(\cdot+\bk)\|_{{W}^\alpha_p(\IId_\theta)} 
		& 
		\leq C \|f_{\theta,\bk}(\cdot+\bk)\|_{{W}^\alpha_p(\IId_\theta)}   \cdot \|\varphi_{\bk}(\cdot+\bk)\|_{ {W}^\alpha_p(\IId_\theta)} \\
		&
		= C \|f_{\theta,\bk}(\cdot+\bk)\|_{{W}^\alpha_p(\IId_\theta)}   \cdot \|\varphi_{\bk} \|_{ {W}^\alpha_p(\IId_{\theta,\bk})} \\
		&\leq C\|f_{\theta,\bk}(\cdot+\bk)\|_{{W}^\alpha_p(\IId_\theta)}.
	\end{aligned}
\end{equation}
Here in the last estimate we used property (iv) of the system $(\varphi_\bk)_{\bk\in \NN_0^d}$. Note, that when $\bx \in \IId_{\theta}$ we have 
$e^{\frac{|\bx+\bk|^2}{2}}\leq e^{\frac{|\bk + \theta(\sign \bk)/2 |^2}{2}}$,  where $\sign \bk:= \brac{\sign k_1, \ldots , \sign k_d}$ and $\sign x := 1$ if $x \ge 0$, and $\sign x := -1$ otherwise for $x \in \RR$. Therefore
 \begin{equation*}  
	\begin{aligned}  
\|f_{\theta,\bk}(\cdot+\bk)\|_{{W}^\alpha_p(\IId_\theta)}
&=	\Bigg(\sum_{|\br|_\infty \leq \alpha}  \int_{\IId_\theta} |D^\br f_\bk(\bx+\bk)| ^p \rd\bx\Bigg)^{1/p}
	\\	
&=	\Bigg(\sum_{|\br|_\infty \leq \alpha} (2\pi)^{d/2} \int_{\IId_\theta}e^{\frac{|\bx+\bk|^2}{2}}|D^\br f_\bk(\bx+\bk)| ^pg(\bx+\bk)\rd\bx\Bigg)^{1/p}
\\
		& \leq C e^{\frac{|\bk + (\theta\sign \bk)/2 |^2}{2p} }\|f_{\theta,\bk}\|_{ {W}^\alpha_p(\IId_{\bk,\theta},\gamma)}
	\\
	&
	\leq C e^{\frac{|\bk + (\theta\sign \bk)/2 |^2}{2p} }\|f\|_{\Wpgamma}.
	\end{aligned}
\end{equation*}
This and \eqref{eq-first} deduce 
 \begin{equation}  
	\begin{aligned} \label{multipl-algebra2}
	\|f_{\theta,\bk}(\cdot+\bk)\varphi_\bk(\cdot+\bk)\|_{{W}^\alpha_p(\IId_\theta)} 
		\leq C e^{\frac{|\bk + (\theta\sign \bk)/2 |^2}{2p} }\|f\|_{\Wpgamma}.
	\end{aligned}
\end{equation}
We have $-|x|^2\leq \frac{\theta^2}{2}-\big|k-\frac{\theta(\sign k)}{2}\big|^2 $ for $x\in [-\frac{\theta}{2},\frac{\theta}{2}]+k,\ k\in \ZZ$. Hence for any $h\in L_q(\IId_{\theta})$ we obtain
\begin{equation}\label{eq-norm-B}
	\begin{aligned}
	\norm{h(\cdot-\bk)}
	{L_q(\IId_{\theta,\bk}, \gamma)} &= \bigg( (2\pi)^{-d/2}\int_{\IId_{\bk,\theta}}\big|h(\bx-\bk)\big|^q e^{-\frac{|\bx|^2}{2}}\rd\bx\bigg)^{1/q}
	\\
	&\leq C e^{- \frac{|\bk - (\theta\sign \bk)/2 |^2}{2q}}\bigg( \int_{\IId_{\bk,\theta}}\big|h(\bx-\bk)\big|^q \rd \bx\bigg)^{1/q}
\\
&
 \ll e^{- \frac{|\bk - (\theta\sign \bk)/2 |^2}{2q}}
	\norm{h}
	{{L}_q(\IId_{\theta})} .
\end{aligned}
\end{equation}
If $q<p$,  we can choose a fixed $\delta>0$ such that
\begin{equation}  \label{<e^{-delta k}}
	{e^{\frac{|\bk + (\theta \sign \bk)/2 |^2}{2p} 
			-\frac{|\bk - (\theta\sign \bk)/2 |^2}{2q}}}
	\leq 
	C e^{- \delta |\bk|^2}, \ \  \bk\in \ZZd.
\end{equation}

Denote by $\tilde{L}_q(\IId)$ and $\tilde{W}^\alpha_p(\IId)$ the subspaces of  $L_q(\IId) $ and $\Wpmix$, respectively,  of all functions $f$ which can be extended to the whole $\RRd$ as $1$-periodic  functions in each variable (denoted again by $f$). Similarly we defined $\tilde{L}_q(\IId_\theta)$ and $\tilde{W}^\alpha_p(\IId_\theta)$. 
 Our main result in this section reads as follows. 
 
\begin{theorem} \label{thm:approx-general-theta}
	Let $\alpha\in \NN$, $1\le q < p <\infty$ and $a >0$, $b \ge 0$, $  \theta >1$. Let $s$ be a pseudo $s$-number. 
	Assume that  
\begin{equation}\label{eq-assumption}
s_n\big(I: \tilde{W}^\alpha_p(\IId)\to \tilde{L}_q(\IId)\big)\asymp n^{-a} (\log n)^b\,, \  n\to \infty.
\end{equation}
\begin{enumerate}
	\item If $s$ is weakly injective in the sense \eqref{eq-injective}, then
	\begin{equation*}\label{A_m-Error-a,b}
		s_n\big(I_\gamma:\Wpgamma\to L_q(\RRd,\gamma)\big) \gg   n^{-a} (\log n)^b \,, \ n\to \infty .
	\end{equation*}
	\item If $s$ is additive, then
	\begin{equation*} 
		s_n\big(I_\gamma:\Wpgamma\to L_q(\RRd,\gamma)\big) \ll   n^{-a} (\log n)^b \,, \ n\to \infty.
	\end{equation*}
\end{enumerate}
\end{theorem}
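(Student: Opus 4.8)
The plan is to realize the weighted embedding $I_\gamma$ as (essentially) a block-diagonal operator over the cubes $\IId_{\theta,\bk}$, $\bk\in\ZZd$, with the $\bk$-th block being a rescaled copy of the periodic embedding $I:\tilde W^\alpha_p(\IId_\theta)\to\tilde L_q(\IId_\theta)$ weighted by the exponentially small factor from \eqref{<e^{-delta k}}. The two bounds then follow from the two-sided comparison of this block-diagonal structure with the unweighted periodic embedding, using respectively weak injectivity (for the lower bound) and additivity (for the upper bound).

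\textbf{Lower bound (part 1).} First I would fix a single cube, say $\bk=\bzero$, and use the lifting/restriction maps already constructed: the map $f\mapsto f_{\theta,\bzero}(\cdot)\varphi_\bzero(\cdot)$ sends $\Wpgamma$ boundedly into $\tilde W^\alpha_p(\IId_\theta)$ by \eqref{multipl-algebra2} (the factor $e^{|\bzero+\cdot|^2/(2p)}$ is a harmless constant), and restriction $h\mapsto h(\cdot)$ from $\tilde L_q(\IId_\theta)$ into $L_q(\IId_{\theta,\bzero},\gamma)\hookrightarrow L_q(\RRd,\gamma)$ is also bounded by \eqref{eq-norm-B}. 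Composing, factor property (c) gives $s_n(I:\tilde W^\alpha_p(\IId_\theta)\to\tilde L_q(\IId_\theta))\ll s_n(\text{restriction of }I_\gamma\text{ to functions supported on }\IId_{\theta,\bzero})$. Since $L_q(\IId_{\theta,\bzero},\gamma)\hookrightarrow L_q(\RRd,\gamma)$ is (up to equivalent renorming on a complemented subspace) a metric injection, weak injectivity \eqref{eq-injective} lets me pass to $s_n(I_\gamma)$ itself; combined with the equivalence $s_n(I:\tilde W^\alpha_p(\IId_\theta)\to\tilde L_q(\IId_\theta))\asymp s_n(I:\tilde W^\alpha_p(\IId)\to\tilde L_q(\IId))$ (a trivial rescaling/change-of-variables argument, $\theta>1$ fixed) and the hypothesis \eqref{eq-assumption}, this yields $s_n(I_\gamma)\gg n^{-a}(\log n)^b$.

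\textbf{Upper bound (part 2).} Here I would decompose $I_\gamma$ additively. Using the partition of unity, write $I_\gamma f=\sum_{\bk\in\ZZd} T_\bk f$, where $T_\bk f:=(f\varphi_\bk)|_{\IId_{\theta,\bk}}$ viewed in $L_q(\RRd,\gamma)$. By \eqref{multipl-algebra2} and \eqref{eq-norm-B}–\eqref{<e^{-delta k}}, each $T_\bk$ factors as $\Wpgamma\to\tilde W^\alpha_p(\IId_\theta)\xrightarrow{I} \tilde L_q(\IId_\theta)\to L_q(\RRd,\gamma)$ with the outer two maps carrying a total operator norm $\ll e^{-\delta|\bk|^2}$, so $s_n(T_\bk)\ll e^{-\delta|\bk|^2}\, s_n(I:\tilde W^\alpha_p(\IId)\to\tilde L_q(\IId))\ll e^{-\delta|\bk|^2} n^{-a}(\log n)^b$. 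Now I group the $\bk$'s into shells and apply additivity \eqref{eq-inequality2}(b'): allocating roughly $n_\bk\asymp e^{-(\delta/2)|\bk|^2} n / \big(\sum_{\bj}e^{-(\delta/2)|\bj|^2}\big)$ indices to block $\bk$ (so that $\sum_\bk n_\bk\le n$ up to rounding, which is fine because the series converges), additivity gives $s_{n}\big(\sum_\bk T_\bk\big)\ll \sum_\bk s_{n_\bk}(T_\bk)\ll \sum_\bk e^{-\delta|\bk|^2} n_\bk^{-a}(\log n_\bk)^b$. Since $n_\bk\gtrsim e^{-(\delta/2)|\bk|^2} n$, the term $n_\bk^{-a}$ contributes a factor $e^{(a\delta/2)|\bk|^2} n^{-a}$, and $(\log n_\bk)^b\ll (|\bk|^2+\log n)^b\ll (|\bk|^2)^b(\log n)^b$ for large $n$; hence each summand is $\ll e^{-(\delta/2)|\bk|^2}|\bk|^{2b}\,n^{-a}(\log n)^b$ and the sum over $\bk$ converges, giving $s_n(I_\gamma)\ll n^{-a}(\log n)^b$.

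\textbf{Main obstacle.} The routine parts are the factorizations (already essentially done in the excerpt through \eqref{multipl-algebra2}–\eqref{<e^{-delta k}}) and the rescaling $\IId_\theta\leftrightarrow\IId$. The delicate point is the upper-bound bookkeeping: one must choose the budget split $n=\sum_\bk n_\bk$ so that every $n_\bk\ge1$ (only finitely many $\bk$ can receive a positive budget, and the infinitely many remaining blocks must be absorbed via property (b) with their small operator norm $e^{-\delta|\bk|^2}$ rather than via (b')), and one must verify that the truncation tail $\sum_{|\bk|\text{ large}} \|T_\bk\|$ is also $\ll n^{-a}(\log n)^b$ — which holds because that tail decays faster than any polynomial in a suitably chosen truncation radius $\asymp\sqrt{\log n}$. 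Handling the interface between the finitely many "active" blocks (treated by additivity) and the infinite "passive" tail (treated by the norm estimate in (b)) is where care is needed; everything else is standard.
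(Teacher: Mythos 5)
Your upper bound is essentially the paper's argument: the same partition-of-unity decomposition $I_\gamma=\sum_{\bk}I_{\theta,\bk}$, the same factorization of each piece through $I_\theta:\tilde W^\alpha_p(\IId_\theta)\to\tilde L_q(\IId_\theta)$ carrying the weight $e^{-\delta|\bk|^2}$ from \eqref{<e^{-delta k}}, additivity on the finitely many active blocks, and property (b) on the passive tail beyond a radius $\asymp\sqrt{\log n}$. One bookkeeping slip: with your budget $n_\bk\asymp n\,e^{-(\delta/2)|\bk|^2}$ the summand becomes $e^{-\delta(1-a/2)|\bk|^2}|\bk|^{2b}\,n^{-a}(\log n)^b$, so your claim that ``the sum over $\bk$ converges'' fails once $a\ge 2$ (which occurs in Corollary~\ref{cor-01}, e.g.\ $a=\alpha$ for large $\alpha$). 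The paper instead takes $n_\bk\asymp n\,e^{-\frac{\delta}{2a}|\bk|^2}$, so the loss from the reduced budget is only $e^{\delta|\bk|^2/2}$ and is always beaten by the gain $e^{-\delta|\bk|^2}$; your argument is repaired by the same choice.

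The lower bound, however, has a genuine gap: your factorization runs in the wrong direction. The maps you invoke, $A_\bzero\colon f\mapsto f_{\theta,\bzero}\varphi_\bzero$ from $\Wpgamma$ into $\tilde W^\alpha_p(\IId_\theta)$ and $B_\bzero$ from $\tilde L_q(\IId_\theta)$ into $L_q(\RRd,\gamma)$, compose as $B_\bzero I_\theta A_\bzero=I_{\theta,\bzero}$, so property (c) yields $s_n(I_{\theta,\bzero})\le\|B_\bzero\|\,s_n(I_\theta)\,\|A_\bzero\|$ --- an upper bound on a piece of $I_\gamma$ in terms of $I_\theta$ (exactly what Step 2 uses), not the inequality $s_n(I_\theta)\ll s_n(I_\gamma)$ that the lower bound requires. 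To reverse it you would need $I_\theta=B\,I_\gamma\,A$ with $A$ mapping $\tilde W^\alpha_p(\IId_\theta)$ \emph{into} $\Wpgamma$; the natural candidate $h\mapsto h\varphi_\bzero$ fails because the composition returns $h\varphi_\bzero$ rather than $h$, division by $\varphi_\bzero$ is unbounded near the boundary of its support, and extension by zero without a cutoff leaves $W^\alpha_p$. The paper avoids localization entirely here: it takes $I_1:\tilde W^\alpha_p(\IId)\to\Wpgamma$ to be the \emph{periodic extension} operator (bounded because $\sum_{\bk}e^{-|\bk-(\sign\bk)/2|^2/2}<\infty$) and $I_2$ the restriction $L_q(\RRd,\gamma)\to L_q(\IId)$ (bounded because the Gaussian density is bounded below on $\IId$), so that $I_2 I_\gamma I_1=I$ exactly; property (c) then gives $s_n(I)\ll s_n(I_\gamma)$, and weak injectivity is used only to replace the target $L_q(\IId)$ by $\tilde L_q(\IId)$. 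You should replace your localized construction by this global one.
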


\begin{proof} 
{\bf Step 1.} {\it Lower bound}. Consider the commutative diagram 
\[
\begin{CD}
	\Wpgamma @ > I_{\gamma} >> L_q(\RRd,\gamma) \\
	@AA I_1 A @VV I_2 V \\
	\tilde{W}^\alpha_p(\IId) @ > I >> L_q(\IId) \,,
\end{CD}
\]
where $I_1$ is the embedding operator and $I_2$ is the restriction of functions on $\RRd$ onto $\IId$. 
If $f$ is a $1$-periodic function on $\RRd$ and $f\in  \tilde{W}^\alpha_p(\IId)$, then
\begin{equation*}\label{eq-low01}
\begin{aligned}
	\|f\|_{\Wpgamma}&= \Bigg((2\pi)^{-d/2}\sum_{|\br|_\infty \leq \alpha} \int_{\RRd} |D^\br f(\bx)|^p e^{-\frac{|\bx|^2}{2}}\rd \bx\Bigg)^{1/p}
	\\
	& = (2\pi)^{-\frac{d}{2p}}\Bigg(\sum_{|\br|_\infty \leq \alpha} \sum_{\bk\in \ZZd} \int_{\IId} |D^\br f(\bx+\bk)|^p e^{-\frac{|\bx+\bk|^2}{2}}\rd \bx\Bigg)^{1/p}
	\\
	& \ll \Bigg(\sum_{|\br|_\infty \leq \alpha}  \int_{\IId} |D^\br f(\bx)|^p \rd \bx \sum_{\bk\in \ZZd}e^{-\frac{|\bk - (\sign \bk)/2 |^2}{2}}\Bigg)^{1/p}
	\\
	&
	\ll 
	\|f\|_{\tilde{W}^\alpha_p(\IId)}
\end{aligned}
\end{equation*}
which implies that $\|I_1\|\ll 1$. For a function $h\in L_q(\RRd,\gamma)$ we have
\begin{equation*}\label{eq-low02}
\|h\|_{L_q(\IId)}=\bigg((2\pi)^{\frac{d}{2}}\int_{\IId}|h(\bx)|^qe^{\frac{|\bx|^2}{2}}g(\bx)\rd \bx\bigg)^{1/q} \leq (2\pi)^{\frac{d}{2q}}e^{\frac{d}{8q}}\|h\|_{\Lqgamma}.
\end{equation*}
This deduces $\|I_2\|\ll 1$. By the ideal property   $(c)$ of pseudo $s$-numbers we get
\begin{equation} \label{eq-low}
s_n(I)= s_n(I_2 I_\gamma I_1) \leq \|I_1\| \cdot s_n(I_\gamma)\cdot \|I_2\|\ll s_n(I_\gamma).
\end{equation}	
Since $s$ is an injective pseudo $s$-number in the weak sense \eqref{eq-injective}, we get
$$
s_n\big(I: 	\tilde{W}^\alpha_p(\IId)\to L_q(\IId)\big)\asymp s_n\big(I: 	\tilde{W}^\alpha_p(\IId)\to \tilde{L}_q(\IId)\big)\asymp n^{-a} (\log n)^b.
$$
This and \eqref{eq-low} give the lower bound.
\\
\noindent
{\bf Step 2.} {\it Upper bound}. First note, that \eqref{eq-assumption} induces
\begin{equation}\label{eq-induce}
s_n\big(I_\theta: \tilde{W}^\alpha_p(\IId_\theta)\to \tilde{L}_q(\IId_\theta)\big)\asymp n^{-a} (\log n)^b\,, \  n\to \infty.
\end{equation}
  We define	for $n\in \NN$,
\begin{equation*} \label{xi-int}	
	\xi_n =  \sqrt{\delta^{-1} 2 a(\log n)}\,,
\end{equation*}
and for $\bk \in \ZZd$ with $\ |\bk|< \xi_n$
\begin{equation*} \label{n_bk}
	n_{\bk}= 
	 \lfloor \varrho n  e^{-\frac{\delta}{2 a}|\bk|^2}  \rfloor,
\end{equation*}
where $\varrho := 2^{-d} \big(1 - e^{-\frac{\delta}{2 a}}\big)^{d}$. We have
\begin{align} 	\label{<n2}
	\sum_{|\bk|< \xi_n}  n_\bk  \le n.
\end{align} 
Indeed,
\begin{equation*}
	\begin{aligned}
		\sum_{|\bk| < \xi_n}n_\bk 
		&	\leq 
		\sum_{|\bk|< \xi_n} \varrho n  e^{-\frac{\delta}{2\alpha}|\bk|^2}
		\leq 
		2^{d} \varrho	n  \sum_{s=0}^{\lfloor \xi_n \rfloor} \binom{s+d-1}{d-1}e^{-\frac{\delta}{2 a}s^2}
		\\
		&	\leq   2^{d}\varrho	n \sum_{s=0}^{\infty} \binom{s+d-1}{d-1}e^{-\frac{\delta}{2 a}s} \leq n, 
	\end{aligned}
\end{equation*}
where in the last estimate we  used the well-known formula
\begin{equation*}\label{eq-auxilary-01}
	\sum_{j=0}^\infty x^j\binom{j+k}{k}=(1-x)^{-k-1}, \ k\in \NN_0, \ x\in (0,1).
\end{equation*}
Let $I_{\theta,\bk}: \Wpgamma \to L_q(\RRd,\gamma)$ be the operator defined as
$$
I_{\theta,\bk}(f):= f\varphi_\bk,  \ f\in \Wpgamma.
$$
By property (iii) of the system $(\varphi_\bk)_{\bk\in \ZZd}$ we have
$
	f
	=  \sum_{\bk \in \ZZd}f_{\theta,\bk}\varphi_\bk 
$
which implies $I_\gamma=\sum_{\bk\in \ZZd}I_{\theta,\bk}$. 
In view of the property (b') of additive pseudo $s$-numbers and \eqref{<n2} we get
\begin{align} \label{f-A_n'^gf}
	s_n(I_\gamma) 
		&\leq
		\sum_{|\bk|< \xi_n} 
		s_{n_\bk}(I_{\theta,\bk}) 
+ \sum_{|\bk|\geq \xi_n} 	\norm{I_{\theta,\bk}}{}. 
\end{align}
For every $\bk\in \NNd_0$, consider the commutative diagram 
\[
\begin{CD}
	\Wpgamma @ > I_{\theta,\bk} >> L_q(\RRd,\gamma) \\
	@VV A_\bk V @AA B_\bk A\\
	\tilde{W}^\alpha_p(\IId_{\theta}) @ > I_{\theta} >> \tilde{L}_q(\IId_{\theta}) \,,
\end{CD}
\]
where
$$
\begin{cases}
A_\bk(f):=f_{\theta,\bk}(\cdot+\bk)\varphi_\bk(\cdot+\bk), & \ f\in \Wpgamma
\\
B_\bk(h):=h(\cdot-\bk),& \  h\in \tilde{L}_q(\IId_{\theta}).
\end{cases}
$$
From \eqref{multipl-algebra2} and \eqref{eq-norm-B} we deduce that
\begin{equation}\label{eq-norm-A-B}
\|A_\bk\| \leq e^{\frac{|\bk + (\theta \sign \bk)/2 |^2}{2p}}\ \ \text{and} \ \ \ \|B_\bk\| \leq e^{-
	\frac{|\bk - (\theta\sign \bk)/2 |^2}{2q}}.
\end{equation}
By this and the property (c) of pseudo $s$-numbers we obtain
	\begin{align*}
s_{n_\bk}(I_{\theta,\bk}) &=s_{n_\bk}(B_\bk   I_\theta   A_\bk)
\leq \|A_\bk\| \cdot s_{n_\bk}(I_\theta)\cdot \|B_\bk\|
		\\
&
		\ll   e^{\frac{|\bk + (\theta \sign \bk)/2 |^2}{2p}-
				\frac{|\bk - (\theta\sign \bk)/2 |^2}{2q}}
	s_{n_\bk}(I_\theta)  .
	\end{align*}
Using  \eqref{<e^{-delta k}}  and \eqref{eq-induce} we get
	\begin{align*}
s_{n_\bk}(I_{\theta,\bk})  
	&
	\ll  e^{- \delta |\bk|^2}  	\Big( n e^{-\frac{\delta}{2a}|\bk|^2} \Big)^{-a} (\log n)^b=e^{- \frac{\delta}{2} |\bk|^2}  	n^{-a} (\log n)^b,
\end{align*}
which implies
\begin{equation}\label{eq-sum-1}
	\begin{aligned}
	\sum_{|\bk|< \xi_n}	s_{n_\bk}(I_{\theta,\bk}) 
	&\ll \sum_{|\bk|< \xi_n}   e^{- \frac{ \delta}{2} |\bk|^2} 
	n^{-a}  (\log n)^b   
	\ll  n^{-a}  (\log n)^b.   
\end{aligned}
\end{equation}
For a fixed $\varepsilon \in (0,1/2)$, from \eqref{eq-norm-A-B} we have
\begin{align*}
	\sum_{|\bk|\geq \xi_n} 	\norm{I_{\theta,\bk}}{}
		& \leq \sum_{|\bk|\geq \xi_n}\|A_\bk\|\cdot \|I_\theta\|\cdot \|B_\bk\|
		\\
		&\ll \sum_{|\bk|\geq \xi_n} 
		 e^{- \frac{|\bk - (\theta \sign \bk)/2 |^2}{2q}+
				\frac{|\bk + (\theta\sign \bk)/2 |^2}{2p}}
		 \|I_\theta\| 
	\\
	&	\ll  \sum_{|\bk|\geq \xi_n}  e^{- \delta |\bk|^2} 
	 \ll e^{- \delta (1-\varepsilon) \xi_n^2} 
	\\
	&	 =\, e^{-2 a (1-\varepsilon) \log n} 
		\ll  n^{-a}  (\log n)^b .
	\end{align*}
	From the last estimate, \eqref{f-A_n'^gf}, and \eqref{eq-sum-1} we obtain the upper bound.
	\hfill
\end{proof}

Study of asymptotic behavior as well as pre-asymptotic estimate of approximation, Kolmogorov, Gelfand, and entropy numbers has a long history. We refer the reader to the book \cite{DTU18B} for results and historical commments. Asymptotic behavior of Weyl and Bernstein numbers of the embedding $I: \tilde{W}^\alpha_p(\IId)\to \tilde{L}_q(\IId)$ was investigated in \cite{Gal91,Ng16}.
As a consequence of Theorem \ref{thm:approx-general-theta} we have the following result.
\begin{corollary}\label{cor-01} Let $\alpha\in \NN$ and $d\in \NN$. Let $1\leq q<p<\infty$. If $1<q $ then 
 we have $$a_n\big(I_\gamma\big)\asymp d_n(I_\gamma)\asymp c_n(I_\gamma)\asymp e_n(I_\gamma)\asymp n^{-\alpha} (\log n)^{(d-1)\alpha},\ \ n\to \infty;$$
and
$$
x_n(I_\gamma)\asymp b_n(I_\gamma)\asymp n^{-\beta}(\log n)^{(d-1)\beta},\ \ n\to \infty,
$$
where
\begin{equation}\label{eq-beta}
\beta =
\left\{\begin{array}{lll}
	\alpha  &  \text{if}\ \   q<p\leq 2\, ,\ \alpha>0, \\
 \alpha-\frac{1}{p}+\frac{1}{2}  & \text{if}\ \   q\leq 2<p,\ \alpha> \frac{1}{p},\\
 \alpha-\frac{1}{p}+\frac{1}{q}  & \text{if}\ \  2\leq q<p,\ \alpha> \frac{1/q-1/p}{p/2-1},\\
 \frac{\alpha p}{2}  & \text{if}\ \  2\leq q <p\,, \ \alpha< \frac{1/q-1/p}{p/2-1} \text{\rm \ or } q\leq 2<p; \alpha < \frac{1}{p}.\
\end{array}\right.
\end{equation}
When $q=1$, the above results still hold for approximation, Kolmogorov,  Weyl and entropy numbers.
\end{corollary}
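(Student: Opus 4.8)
The plan is to read off Corollary \ref{cor-01} from Theorem \ref{thm:approx-general-theta} by inserting, for each of the pseudo $s$-numbers in question, the known asymptotic order of the corresponding quantity for the periodic embedding $I:\tilde W^\alpha_p(\IId)\to\tilde L_q(\IId)$, and then repairing the two estimates that the theorem leaves one-sided by invoking the general comparison inequalities \eqref{eq-inequality1}--\eqref{eq-inequality4}.

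First I would collect the classical periodic results. For $1<q<p<\infty$ one has
$$
a_n(I)\asymp d_n(I)\asymp c_n(I)\asymp e_n(I)\asymp n^{-\alpha}(\log n)^{(d-1)\alpha},
$$
while $x_n(I)\asymp b_n(I)\asymp n^{-\beta}(\log n)^{(d-1)\beta}$ with $\beta$ given by \eqref{eq-beta}; see \cite{DTU18B} for the former and \cite{Gal91,Ng16} for the latter. All values of $\beta$ in \eqref{eq-beta} are strictly positive. For $q=1$ the first chain remains valid for $a_n,d_n,e_n$ and the second for $x_n$, whereas the periodic estimates for $c_n$ and $b_n$ are no longer of this clean form (the $L_\infty$ endpoint enters, e.g. through the duality $c_n(T)=d_n(T')$). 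In every case above, then, the hypothesis \eqref{eq-assumption} of Theorem \ref{thm:approx-general-theta} is satisfied with the indicated $a$ and $b$.

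Next I would apply Theorem \ref{thm:approx-general-theta}. Since Gelfand, Weyl and entropy numbers are both weakly injective and additive, parts (1) and (2) together give
$$
c_n(I_\gamma)\asymp e_n(I_\gamma)\asymp n^{-\alpha}(\log n)^{(d-1)\alpha},\qquad x_n(I_\gamma)\asymp n^{-\beta}(\log n)^{(d-1)\beta}.
$$
For the approximation and Kolmogorov numbers, which are additive but not injective, part (2) yields only the upper bounds $a_n(I_\gamma),d_n(I_\gamma)\ll n^{-\alpha}(\log n)^{(d-1)\alpha}$; the matching lower bound for $d_n(I_\gamma)$ comes from $d_n(I_\gamma)\gg e_n(I_\gamma)$, which is \eqref{eq-inequality4} applied with $e_n(I_\gamma)\asymp n^{-\alpha}(\log n)^{(d-1)\alpha}$, and then $a_n(I_\gamma)\ge d_n(I_\gamma)$ because the approximation number is the largest pseudo $s$-number. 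For the Bernstein numbers, which are injective but not additive, part (1) gives the lower bound $b_n(I_\gamma)\gg n^{-\beta}(\log n)^{(d-1)\beta}$, and the reverse bound follows from $b_n(I_\gamma)\ll x_n(I_\gamma)$, i.e. \eqref{eq-inequality3}, which is legitimate because $x_n(I_\gamma)\asymp n^{-\beta}(\log n)^{(d-1)\beta}$ with $\beta>0$ and $(d-1)\beta\ge 0$. When $q=1$ the same reasoning applies to $a_n,d_n,x_n,e_n$ only: the entropy and Weyl estimates and the upper bounds for $a_n,d_n$ are unchanged, the lower bound for $d_n$ is again \eqref{eq-inequality4}, and that for $a_n$ is $a_n\ge d_n$.

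The proof therefore needs no new estimate beyond the periodic-case results quoted above; the only point demanding care is the bookkeeping -- routing each pseudo $s$-number through the correct half of Theorem \ref{thm:approx-general-theta} (weakly injective gives the lower bound, additive gives the upper bound), and checking that the auxiliary inequalities \eqref{eq-inequality3} and \eqref{eq-inequality4} are invoked only after the relevant limiting order $n^{-a}(\log n)^{b}$ with $a>0$, $b\ge 0$ has already been established for $x_n(I_\gamma)$ and $e_n(I_\gamma)$, respectively.
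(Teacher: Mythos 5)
Your proposal is correct and follows essentially the same route as the paper: quote the periodic asymptotics, feed them into Theorem \ref{thm:approx-general-theta} (additivity for upper bounds, weak injectivity for lower bounds), and patch the non-injective cases $a_n,d_n$ via the entropy-number comparisons and the non-additive case $b_n$ via $b_n\ll x_n$. The only cosmetic difference is that the paper gets the lower bound for $a_n(I_\gamma)$ directly from $e_n(I_\gamma)\le a_n(I_\gamma)$ in \eqref{eq-inequality2}, whereas you route it through $a_n\ge d_n$; both are valid.
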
	

\begin{proof} Let $1<q<p<\infty$. We know that   
$$a_n\big(I\big)\asymp d_n(I)\asymp c_n(I)\asymp e_n(I)\asymp n^{-\alpha} (\log n)^{(d-1)\alpha},\ \ n\to \infty,$$
see  \cite[Theorems 4.3.1, 4.5.1, 6.2.1 and Section 9.6]{DTU18B}
and
$$
x_n(I_\gamma)\asymp b_n(I_\gamma)\asymp n^{-\beta}(\log n)^{(d-1)\beta},\ \ n\to \infty,
$$
where $\beta$ is given in \eqref{eq-beta}, see \cite[Theorems 2.1, 2.3]{Ng16} and \cite{Gal91}. 
Since approximation, Kolmogorov, Gelfand, Weyl and entropy numbers are additive pseudo $s$-numbers, by Theorem \ref{thm:approx-general-theta} we obtain the upper bound for these  pseudo $s$-numbers of the embedding $I_\gamma$. The upper bound for Bernstein numbers follows from the inequality $b_n(I_\gamma)\ll x_n(I_\gamma)$, see \eqref{eq-inequality3}.

By the weakly injective property and Theorem \ref{thm:approx-general-theta} we get lower bound for Bernstein, Gelfand, entropy, and Weyl numbers of $I_\gamma$. The lower bound for approximation and Kolmogorov numbers follows from the inequalities $e_n(I_\gamma)\leq a_n(I_\gamma)$ and $e_n(I_\gamma)\ll d_n(I_\gamma)$, see \eqref{eq-inequality3} and \eqref{eq-inequality4}.

In view of \cite[Theorems 4.3.1, 4.5.1, 6.2.3]{DTU18B} and \cite[Theorem 2.6]{Ng16}, the argument of the case $q=1$ for Kolmogorov, approximation, entropy and Weyl numbers is carried out similarly.
\hfill 
\end{proof}

%


\section{The case $p = 2$}\label{sec-p=2}

 In this section, we study the approximation of functions from Hilbert spaces $\Hh^\alpha$ which is an extension of $W_2^\alpha(\RRd,\gamma)$ from natural smoothness to the real positive smoothness $\alpha$.
For $k\in \NN_0$, the normalized probabilistic Hermite polynomial
$H_k$ of degree $k$ on $\RR$ is defined by
\begin{equation*} 
	H_k(x) 
	:= 
	\frac{(-1)^k}{\sqrt{k!}} 
	\exp\left(\frac{x^2}{2}\right) \frac{\rd^k}{\rd x^k} \exp\left(-\frac{x^2}{2}\right) .
\end{equation*}
For a multi-index $\bk\in \NNd_0$ we define the $d$-variate Hermite
polynomial $H_\bk$ by
\begin{equation*}\label{H_bk}
	H_\bk(\bx) :=\prod_{j=1}^d H_{k_j}(x_j),
	\;\; \bx\in \RRd.
\end{equation*}
It is well-known that the Hermite polynomials $(H_\bk)_{\bk \in \NNd_0}$ constitute an orthonormal basis of the Hilbert space $L_2(\RRd,\gamma)$ (see, e.g.,  \cite[Section 5.5]{Szego1939}). In particular,  every function $f \in L_2(\RRd,\gamma)$ can be represented by the Hermite series 
\begin{equation}\label{H-series}
	f = \sum_{\bk \in \NNd_0} \hat{f}(\bk) H_\bk \ \ {\rm with} \ \ \hat{f}(\bk) := \int_{\RRd} f(\bx)\, H_\bk(\bx)\gamma(\rd \bx) 
\end{equation}
converging in the norm of $L_2(\RRd,\gamma)$. In addition, there holds  Parseval's identity
\begin{equation*}\label{P-id}
	\norm{f}{L_2(\RRd,\gamma)}^2= \sum_{\bk \in \NNd_0} |\hat{f}(\bk)|^2.
\end{equation*}

For $\alpha \in \NN_0$ and $\bk \in \NNd_0$, we define the sequence $\rho_{\alpha}:=(\rho_{\alpha,\bk})_{\bk\in \NNd_0}$, where
\begin{equation*}\label{rho_bk}
	\rho_{\alpha,\bk}: = \prod_{j=1}^d \brac{k_j + 1}^\alpha.
\end{equation*}
The following lemma was proved in \cite{DILP18}. 
\begin{lemma}\label{lemma:N-eq}
	Let $\alpha \in \NN_0$. Then we have that
	\begin{equation*}\label{N-eq}
		\norm{f}{\Wa}^2 \asymp \sum_{\bk \in \NNd_0} \rho_{\alpha,\bk}|\hat{f}(\bk)|^2, \quad    f \in \Wa.
	\end{equation*}
\end{lemma}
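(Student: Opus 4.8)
The plan is to express both sides of the claimed equivalence in terms of the Hermite coefficients $(\hat f(\bk))_{\bk\in\NNd_0}$ of $f$ and then compare the resulting weights coordinate by coordinate. The starting point is the pair of one-dimensional identities for the normalized Hermite polynomials: the differentiation rule $H_k'=\sqrt{k}\,H_{k-1}$ and the three-term recurrence $xH_k(x)=\sqrt{k+1}\,H_{k+1}(x)+\sqrt{k}\,H_{k-1}(x)$ (both read off from $H_k=\mathrm{He}_k/\sqrt{k!}$ and the classical formulas for $\mathrm{He}_k$). Together they show that the formal $L_2(\RR,\gamma)$-adjoint of $\rd/\rd x$, namely $h\mapsto xh-h'$, sends $H_k$ to $\sqrt{k+1}\,H_{k+1}$; in $d$ variables, $h\mapsto x_jh-\partial_jh$ sends $H_\bk$ to $\sqrt{k_j+1}\,H_{\bk+\be_j}$, where $\be_j$ is the $j$-th coordinate unit vector.

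Using this, and justifying integration by parts by the super-polynomial decay of the Gaussian density (legitimate since each $H_\bk$ is a polynomial and, by the very definition of $\Wa$, every mixed derivative $D^\br f$ with $|\br|_\infty\le\alpha$ lies in $L_2(\RRd,\gamma)$), I would compute
\[
\widehat{D^\br f}(\bk)=\int_{\RRd}D^\br f\cdot H_\bk\,\rd\gamma=\int_{\RRd}f\cdot\Big(\prod_{j=1}^d(x_j-\partial_j)^{r_j}H_\bk\Big)\rd\gamma=\Big(\prod_{j=1}^d\sqrt{\tfrac{(k_j+r_j)!}{k_j!}}\Big)\hat f(\bk+\br)
\]
for every $\br$ with $|\br|_\infty\le\alpha$. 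By Parseval's identity applied to $D^\br f$ and the substitution $\bk\mapsto\bk-\br$,
\[
\norm{D^\br f}{L_2(\RRd,\gamma)}^2=\sum_{\substack{\bk\in\NNd_0\\\bk\ge\br}}\Big(\prod_{j=1}^d\tfrac{k_j!}{(k_j-r_j)!}\Big)|\hat f(\bk)|^2,
\]
and summing over all $\br$ with $|\br|_\infty\le\alpha$ and interchanging the nonnegative sums gives
\[
\norm{f}{\Wa}^2=\sum_{\bk\in\NNd_0}|\hat f(\bk)|^2\prod_{j=1}^d\bigg(\sum_{r=0}^{\min(\alpha,k_j)}\frac{k_j!}{(k_j-r)!}\bigg),
\]
since the constraints $|\br|_\infty\le\alpha$ and $\br\le\bk$ decouple across coordinates.

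It then remains to prove the elementary one-variable estimate $w_\alpha(k):=\sum_{r=0}^{\min(\alpha,k)}k!/(k-r)!\asymp(k+1)^\alpha$, with constants depending only on $\alpha$; taking the product over $j$ turns $\prod_j w_\alpha(k_j)$ into a quantity comparable to $\rho_{\alpha,\bk}=\prod_j(k_j+1)^\alpha$, which proves the lemma (with constants depending on $\alpha$ and $d$). For the upper bound one uses $k!/(k-r)!=k(k-1)\cdots(k-r+1)\le k^r\le(k+1)^\alpha$ for $0\le r\le\alpha$, together with the fact that there are at most $\alpha+1$ terms. For the lower bound, if $k\ge\alpha$ the single term $r=\alpha$ already gives $k!/(k-\alpha)!\ge(k-\alpha+1)^\alpha\ge(\alpha+1)^{-\alpha}(k+1)^\alpha$ because $(k-\alpha+1)/(k+1)\ge 1/(\alpha+1)$; for the finitely many values $0\le k<\alpha$ one has $w_\alpha(k)\ge1$ and $(k+1)^\alpha\le\alpha^\alpha$, so the comparison holds there as well.

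The only step that is not pure bookkeeping is the coefficient formula for $D^\br f$: one must ensure the integration by parts is valid for an arbitrary $f\in\Wa$, not merely for polynomials. I expect this to be the main (if minor) obstacle; it can be handled either by the decay argument sketched above — cutting off at radius $R$ and letting $R\to\infty$, the error terms involving derivatives of the cutoff being killed by the rapid decay of $g H_\bk$ and Cauchy--Schwarz in $L_2(\gamma)$ — or, more cleanly, by first verifying the identity on the dense subspace spanned by the Hermite polynomials, where it is a finite computation, and then extending to all of $\Wa$ by continuity, using completeness of $\Wa$ and the two-sided norm bound just obtained.
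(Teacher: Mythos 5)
Your proof is correct. Note that the paper does not prove Lemma \ref{lemma:N-eq} internally at all---it is quoted from \cite{DILP18}---so there is no in-paper argument to compare against; your route (the raising identity $(x_j-\partial_j)H_{k_j}=\sqrt{k_j+1}\,H_{k_j+1}$, hence $\widehat{D^\br f}(\bk)=\prod_j\sqrt{(k_j+r_j)!/k_j!}\,\hat f(\bk+\br)$, Parseval for each $D^\br f$, and the elementary comparison $\sum_{r=0}^{\min(\alpha,k)}k!/(k-r)!\asymp(k+1)^\alpha$) is the standard one and is essentially what that reference does. The only point worth tightening is the justification of the integration by parts for a general $f\in\Wa$: your first suggestion (cut off at radius $R$ and use that $H_\bk g$ and all its derivatives decay faster than any polynomial, together with Cauchy--Schwarz against the $L_2(\gamma)$ norms of $f$ and $D^\br f$) is the right one, whereas the ``extend from polynomials by continuity'' alternative is mildly circular, since density of the Hermite polynomials in $\Wa$ is not more elementary than the lemma itself.
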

By this lemma, we extend the space $W^\alpha_2(\RRd,\gamma)$ to  any  $\alpha > 0$. Denote by $\Hh^\alpha$ the space of all   functions $f \in L_2(\RRd,\gamma)$ represented by the Hermite series \eqref{H-series} for which  the norm
	\begin{equation*}\label{Hh-norm}
	\norm{f}{\Hh^\alpha} := \Bigg(\sum_{\bk \in \NNd_0} \rho_{\alpha,\bk}|\hat{f}(\bk)|^2\Bigg)^{1/2}
\end{equation*}
is finite.
With this definition, we can identify $W^\alpha_2(\RRd,\gamma)$ with $\Hh^\alpha$ for $\alpha \in \NN$ in the sense of equivalent norms.

\begin{theorem} 	\label{theorem:widths:p=q=2}
	Let $\alpha >0$ and $s\in \{a,b,c,d,e,x\}$. Then 
	\begin{equation*}\label{widths:p=q=2}
	s_n\big(I_\gamma:{\Hh}^\alpha\to  L_2(\RRd,\gamma)\big)
		\asymp 
		n^{-\frac{\alpha}{2}} (\log n)^{\frac{(d-1)\alpha}{2}}, \ \ n\to \infty. 
	\end{equation*}
Moreover, if $s\in \{a,b,c,d,x\}$ then
\begin{equation} \label{eq-asymptotic}
	\lim\limits_{n\to \infty} \frac{s_n\big(I_\gamma:{\Hh}^\alpha\to  L_2(\RRd,\gamma)\big)}{n^{-\frac{\alpha}{2}}(\ln n)^{\frac{\alpha(d-1)}{2}}}= 
	\bigg( \frac{1}{(d-1)!}\bigg)^{\alpha/2}\,.
\end{equation}
\end{theorem}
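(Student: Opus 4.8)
The plan is to reduce the problem on $\RRd$ to a diagonal operator on a weighted sequence space, exactly as in Theorem \ref{thm:approx-general-theta} but in the Hilbert-space setting where all six pseudo $s$-numbers in $\{a,b,c,d,e,x\}$ coincide for diagonal operators. Indeed, using the Hermite expansion \eqref{H-series}, the embedding $I_\gamma:\Hh^\alpha\to L_2(\RRd,\gamma)$ is (isometrically, up to the equivalent norm of Lemma \ref{lemma:N-eq}) the diagonal operator $D_\sigma:(\xi_\bk)\mapsto(\sigma_\bk\xi_\bk)$ on $\ell_2(\NNd_0)$ with $\sigma_\bk=\rho_{\alpha,\bk}^{-1/2}=\prod_{j=1}^d(k_j+1)^{-\alpha/2}$. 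For a diagonal operator on $\ell_2$ with a non-increasing rearrangement $(\sigma_n^*)$ of the weights, one has $a_n(D_\sigma)=c_n(D_\sigma)=d_n(D_\sigma)=x_n(D_\sigma)=\sigma_n^*$ and $b_n(D_\sigma)=\sigma_n^*$ as well, while $e_n(D_\sigma)$ is comparable to the same rearrangement up to the standard entropy-number estimates; so the whole theorem follows once we understand the decay of the rearranged sequence $\big(\prod_{j=1}^d(k_j+1)^{-\alpha/2}\big)^*$.

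The core combinatorial step is therefore: estimate $\#\{\bk\in\NNd_0:\prod_{j=1}^d(k_j+1)>t\}$, equivalently (taking logarithms) the number of lattice points with $\sum_{j=1}^d\log(k_j+1)<\log t$, which is the classical "hyperbolic cross" counting problem. The well-known asymptotics give this cardinality $\asymp t\,(\log t)^{d-1}/(d-1)!$ as $t\to\infty$ (with the sharp constant $\tfrac{1}{(d-1)!}$). Inverting this relation: if $N(t):=\#\{\bk:\prod(k_j+1)^{-\alpha/2}>t^{-\alpha/2}\}\sim \tfrac{1}{(d-1)!}t(\log t)^{d-1}$, then setting $N(t)=n$ and solving asymptotically yields $t\sim \big((d-1)!\,n\big)\big/(\log n)^{d-1}$ up to lower-order corrections, hence $\sigma_n^*\sim t^{-\alpha/2}\sim \big((d-1)!\big)^{-\alpha/2} n^{-\alpha/2}(\log n)^{(d-1)\alpha/2}$, which is precisely the claimed two-sided asymptotic with the explicit constant $\big(1/(d-1)!\big)^{\alpha/2}$ in \eqref{eq-asymptotic}. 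The coarse bound $s_n\asymp n^{-\alpha/2}(\log n)^{(d-1)\alpha/2}$ for all $s\in\{a,b,c,d,e,x\}$, including entropy numbers, follows from the two-sided counting estimate together with the inequalities \eqref{eq-inequality1}, \eqref{eq-inequality2} recalled in Section \ref{sec-pseudo} (entropy numbers of a diagonal $\ell_2$ operator with this regular decay obey $e_n\asymp a_n$).

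The sharp limit \eqref{eq-asymptotic} requires the precise constant in the hyperbolic-cross lattice-point count, not merely the order; this is the main obstacle. I would invoke the known sharp asymptotics for the number of integer points under a hyperbolic cross (available, e.g., in the monograph \cite{DTU18B} and the references therein, and already used implicitly for the periodic embedding in Corollary \ref{cor-01}), and then carefully perform the asymptotic inversion $n\mapsto\sigma_n^*$, controlling the error terms $(\log\log n)/(\log n)$ that appear when solving $t(\log t)^{d-1}=c\,n$ for $t$. Raising to the power $-\alpha/2$ preserves the leading term and the constant. Finally, since for diagonal operators on Hilbert space $a_n=b_n=c_n=d_n=x_n=\sigma_n^*$ exactly, the limit \eqref{eq-asymptotic} holds simultaneously for all $s\in\{a,b,c,d,x\}$; entropy numbers are excluded from the sharp statement because only the order (not the constant) of $e_n$ is controlled by $a_n$ and $b_n$ via \eqref{eq-inequality2}.
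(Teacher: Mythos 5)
Your proposal is correct and follows essentially the same route as the paper: factorization of $I_\gamma$ through the diagonal operator $D_{\sqrt{\rho_{\alpha}}}$ on $\ell_2(\NNd_0)$, identification of its pseudo $s$-numbers with the non-increasing rearrangement of $\big(\rho_{\alpha,\bk}^{-1/2}\big)_{\bk\in\NNd_0}$, the sharp hyperbolic-cross counting asymptotics with constant $\frac{1}{(d-1)!}$, and the squeeze $b_n \ll e_n \le a_n$ to cover entropy numbers. One small clarification: since $\Hh^\alpha$ is \emph{defined} by the weighted-$\ell_2$ norm, the factorization is an exact isometry rather than merely an equivalence of norms, and this exactness is what legitimizes the precise constant in \eqref{eq-asymptotic}.
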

\begin{proof}
It has been proved in \cite{DN23} that 
$$	a_n\big(I_\gamma\big)=	d_n\big(I_\gamma\big)
\asymp 
n^{-\frac{\alpha}{2}} (\log n)^{\frac{(d-1)\alpha}{2}}, \ \ n\to \infty.
$$
Since  ${\Hh}^\alpha$ and $L_2(\RRd,\gamma)$ are Hilbert spaces, there is only one $s$-number of $I_\gamma$, see \cite{Pie74,Pie80B}. This means
\begin{equation}\label{eq-acdx}
a_n\big(I_\gamma\big)=	d_n\big(I_\gamma\big)=c_n\big(I_\gamma\big)=	x_n\big(I_\gamma\big)=b_n\big(I_\gamma\big)\asymp 
n^{-\frac{\alpha}{2}} (\log n)^{\frac{(d-1)\alpha}{2}}, \ \ n\to \infty.
\end{equation}
From \eqref{eq-inequality2} we get $b_n(I_\gamma)\ll e_n(I_\gamma)\leq a_n(I_\gamma)$. This together with \eqref{eq-acdx}  implies the result for entropy numbers. 

We proceed with the proof of \eqref{eq-asymptotic}. Consider the diagram
	\begin{equation*}
	\begin{CD}
		\Hh^\alpha  @ > I_\gamma >> L_2(\RRd,\gamma) \\
		@VV A V @AA B A\\
		\ell_2(\NNd_0) @ > D_{\sqrt{\rho_{\alpha}}} >> \ell_2(\NNd_0) \,, 
	\end{CD}
\end{equation*}
	where the linear operators $A$, 	$D_{\sqrt{\rho_{\alpha}}} $, and $B $ 
are defined as 
\begin{equation*}	\label{ws-12}
	\begin{aligned}
		Af & : =  \big( \sqrt{\rho_{\alpha,\bk}}\hat{f}(\bk)\big)_{\bk\in \NNd_0}\, , \qquad f\in \Hh^\alpha 
		\\
	D_{\sqrt{\rho_\alpha}}\xi & :=   (\xi_\bk/ \sqrt{\rho_{\alpha,\bk}})_{\bk\in \NNd_0}\,  , \qquad \xi=(\xi_\bk)_{\bk\in \NNd_0}
		\\
		(B\xi)(\bx)& :=  \sum_{\bk\in \NNd_0} \xi_\bk\,  H_\bk(\bx)\, , \qquad \bx \in  \RRd\, .
	\end{aligned}
\end{equation*}
It is obvious that $\|A\|=\|B\|=1$.
From the ideal property (c) and the identity $I_\gamma = B\,D_{\sqrt{\rho_{\alpha}}} \, A$ it follows
\[
s_n\big(I_\gamma\big) \le s_n \big(D_{\sqrt{\rho_{\alpha}}} \big) \, .
\]
 
It is easy to see that the operators $A$ and $B$ are invertible and  that $\|A^{-1}\|=\|B^{-1}\|=1$. 
Employing the same type of arguments with respect to the diagram
	\begin{equation*}
	\begin{CD}
		\Hh^\alpha  @ > I_\gamma >> L_2(\RRd,\gamma) \\
@AA {A^{-1}} A @VV B^{-1} V\\
		\ell_2(\NNd_0) @ > D_{\sqrt{\rho_{\alpha}}} >> \ell_2(\NNd_0) \, 
	\end{CD}
\end{equation*} we also get
\[
s_n\big(I_\gamma\big) \geq s_n \big(D_{\sqrt{\rho_{\alpha}}}\big) \, .
\]
Consequently, we obtain
\[
s_n\big(I_\gamma\big)=
s_n\big(D_{\sqrt{\rho_{\alpha}}}\big) .
\]

Note, that the sequence $s_n\big(I_\gamma\big)=s_n\big(D_{\sqrt{\rho_{\alpha}}})$ is the non-increasing rearrangement of the sequence
$(1/\sqrt{\rho_{\alpha,\bk}}
)_{\bk\in \NNd_0}$, see \cite[Theorem 11.11.3]{Pie80B}. 
We put
\[
c(r,d):= \bigg|\bigg\{\bk \in \NNd_0: ~\prod_{j=1}^d (k_j+1)
\leq r \bigg\}\bigg|, \quad r \in \NN \, .
\]
From  \cite[Theorems 3.4 and 3.5]{ChD16} (with $a=1$) we find
\begin{equation}\label{ch}
\frac{1}{(d-1)!}\,  \frac{r(\ln r)^d}{\ln r+d} < c(r,d) < \frac{1}{(d-1)!} \, \frac{r(\ln r +d\ln 2)^d}{\ln r+d\ln 2 +d-1}
\end{equation}
for $r>r*>1$.  
For $n\geq 2$, choose $r\in \NN$ such that $ c(r-1,d) <n \le c(r,d)$. By the definition of $c(r,d)$ we  have
$$
r^{-\alpha/2}= s_{c(r,d)}  (I_\gamma) \leq  s_n  (I_\gamma) \leq  s_{c(r-1,d)}  (I_\gamma) =(r-1)^{-\alpha/2}.
$$
Clearly, $\lim_{r\to \infty} c(r, d) = \infty$. Moreover, the sequence $n\, (\ln n)^{-(d-1)}$ is increasing for 
$n > e^{d-1}$. Hence, we obtain
for sufficiently large $r \in  \NN$ the two-sided inequality
\[
\frac{ c(r - 1, d)}{r (\ln c(r-1,d))^{d-1}}
\le   \, \frac{s_n(I_\gamma)^{2/\alpha}}{n^{-1}(\ln n)^{d-1}}\le 
\frac{c(r, d)}{(r-1)\, (\ln c(r, d))^{d-1}}\, .
\]
Applying \eqref{ch} the claim follows. 
\hfill
\end{proof}
\begin{remark} Observe that, we do not have factor $2^d$ in the asymptotic constant in \eqref{eq-asymptotic}. This is a difference compared to the asymptotic constant of the embedding    $I: \tilde{W}_{2}^\alpha(\IId) \to \title{L}_2(\IId)$, see \cite{KSU15}.
\end{remark}


Let us emphasize that for  $\alpha>0$, the space $\Hh^\alpha$ is not embedded in $L_q(\RRd)$ with $2<q\leq \infty$. Therefore, in the next step we study the asymptotic behavior of pseudo $s$-numbers of the embedding of $\Hh^\alpha$ into the space $L_{\infty}^{\sqrt{g}}(\RRd)$, where the norm of $f\in L_{\infty}^{\sqrt{g}}(\RRd)$ is defined by
$$
\|f\|_{L_{\infty}^{\sqrt{g}}(\RRd)}: = \sup_{\bx\in \RRd}\big|f(\bx)\sqrt{g(\bx)}\big|<\infty.
$$
We have the following theorem.
\begin{theorem}
Let $\alpha>1-\frac{1}{6}$ and $s\in \{a,b,c,d,e,x\}$. Then we have
\begin{equation*}
	\begin{aligned}
n^{-\frac{\alpha}{2}-\frac{d}{4}} (\log n)^{(\frac{\alpha}{2}+\frac{d}{4})(d-1)} & \ll
s_n\big(I_\gamma: \Hh^\alpha\to L_{\infty}^{\sqrt{g}}(\RRd)\big)
\\
&\ll n^{-\frac{\alpha}{2}-\frac{1}{12}+\frac{1}{2}}(\log n)^{(\frac{\alpha}{2}+\frac{1}{12})(d-1)}\,, \ n\to \infty.
	\end{aligned}
\end{equation*}
 
\end{theorem}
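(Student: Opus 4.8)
The strategy is to reduce the problem to known estimates for the embedding $I:\tilde W_2^\alpha(\IId)\to\tilde L_\infty(\IId)$ (or to $L_\infty$ on $\IId$) by the same decomposition-and-assembling technique as in Theorem~\ref{thm:approx-general-theta}, combined with the Hilbert-space reduction already used in Theorem~\ref{theorem:widths:p=q=2}. For the \emph{lower bound}, I would first observe that, since all $s\in\{a,b,c,d,e,x\}$ satisfy $e_n\ll s_n$ up to constants under a polynomial-log asymptotic (via \eqref{eq-inequality1}, \eqref{eq-inequality2}, \eqref{eq-inequality3}, \eqref{eq-inequality4}) and $b_n\le\min\{c_n,d_n\}\le a_n$, it suffices to produce a lower bound for the smallest member, which is essentially the Bernstein number $b_n$ or the entropy number $e_n$. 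I would exhibit, for a suitable level $r$, a finite-dimensional subspace of $\Hh^\alpha$ spanned by Hermite polynomials $H_\bk$ with $\prod_j(k_j+1)\le r$ whose dimension is $c(r,d)\asymp r(\log r)^{d-1}/(d-1)!$ (using \eqref{ch}), and estimate the ratio $\|I_\gamma f\|_{L_\infty^{\sqrt g}}/\|f\|_{\Hh^\alpha}$ from below on this subspace. The key point is that $\|H_\bk\sqrt g\|_{L_\infty(\RRd)}$ can be controlled from below by the $L_2(\RRd,\gamma)$ behaviour of $H_\bk$ together with the known growth of Hermite polynomials; matching the weight $\sqrt{\rho_{\alpha,\bk}}$ on the source side against $\|H_\bk\sqrt g\|_\infty$ on the target side produces the extra factor $n^{-d/4}(\log n)^{d(d-1)/4}$ compared to Theorem~\ref{theorem:widths:p=q=2}.

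For the \emph{upper bound}, I would set up the analogue of the commutative diagram in Step~2 of the proof of Theorem~\ref{thm:approx-general-theta}: decompose $f=\sum_{\bk\in\ZZd}f_{\theta,\bk}\varphi_\bk$, write $I_\gamma=\sum_\bk I_{\theta,\bk}$, and for each cube use the factorization through $I_\theta:\tilde W_2^\alpha(\IId_\theta)\to \tilde L_\infty^{(w)}(\IId_\theta)$ where the target norm is the $L_\infty$ norm weighted so that the restriction operator $B_\bk$ into $L_\infty^{\sqrt g}(\RRd)$ has the exponentially decaying operator norm analogous to \eqref{eq-norm-B}. Because $p=2$ here (so the mismatch $1/p-1/q$ is replaced by the loss coming from $q=\infty$, i.e.\ the $+1/2$ in the exponent), I would need $\|A_\bk\|\le e^{|\bk+\cdots|^2/4}$ and $\|B_\bk\|\le e^{-c|\bk|^2}$ with a gap that can still be absorbed into $e^{-\delta|\bk|^2}$; this is exactly the role of the hypothesis $\alpha>1-\tfrac16$, which guarantees the one-cube rate $s_n(I_\theta)\asymp n^{-\alpha/2-1/12+1/2}(\log n)^{(\alpha/2+1/12)(d-1)}$ is summable-with-geometric-weights. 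The one-cube estimate $s_n\big(I:\tilde W_2^\alpha(\IId)\to \tilde L_\infty(\IId)\big)\ll n^{-\alpha/2-1/12+1/2}(\log n)^{(\alpha/2+1/12)(d-1)}$ is the known bound for approximation (hence for all members of $\{a,b,c,d,e,x\}$ via the standard inequalities) on the periodic Sobolev space of mixed smoothness into $L_\infty$, coming from the best available upper bounds (the $1/12$ is the Bourgain--type gain; see the results quoted in \cite{DTU18B}). Then the tail $\sum_{|\bk|\ge\xi_n}\|I_{\theta,\bk}\|$ is handled verbatim as in Step~2.

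The two remaining technical points are: (i) justifying that the algebra and weight-transfer inequalities \eqref{eq-first}, \eqref{multipl-algebra2}, \eqref{eq-norm-B} go through with $p=2$ on the source and a weighted $L_\infty$ on the target — the $L_\infty$ side is actually easier since no integration is needed, only the pointwise bound $\sqrt{g(\bx)}\le C e^{-c|\bk|^2}\sqrt{g(\bx-\bk)}$ type estimate for $\bx$ in the shifted cube; and (ii) checking that the known one-cube $L_\infty$ rate is valid for the \emph{additive} pseudo $s$-numbers $a,c,d,e$ and, via \eqref{eq-inequality3}, for $x$ and $b$ as well, and that the lower-bound member is genuinely bounded below by the claimed rate. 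The main obstacle I expect is the lower bound: one must show that the Bernstein/entropy number does not drop below $n^{-\alpha/2-d/4}(\log n)^{(\alpha/2+d/4)(d-1)}$, which requires a careful lower estimate of $\sup_\bx|H_\bk(\bx)|\sqrt{g(\bx)}$ uniformly over the relevant hyperbolic-cross index set and then a volume/counting argument with $c(r,d)$; the gap between the exponents $-\alpha/2-d/4$ and $-\alpha/2-1/12+1/2$ reflects the fact that tight two-sided bounds for $W_2^\alpha\hookrightarrow L_\infty$ of mixed smoothness are not known even in the unweighted periodic case, so the theorem is necessarily stated with non-matching bounds.
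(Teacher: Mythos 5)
Your overall strategy of reducing the upper bound to the approximation numbers (the largest member of $\{a,b,c,d,e,x\}$) and the lower bound to the Bernstein numbers (the smallest) matches the paper, but both of your core arguments have genuine gaps.

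For the upper bound, the cube-decomposition route of Theorem \ref{thm:approx-general-theta} fails in this setting. The exponential decay \eqref{<e^{-delta k}} that makes the tail $\sum_{|\bk|\ge \xi_n}\|I_{\theta,\bk}\|$ summable requires $q<p$; here the source is $W^\alpha_2(\RRd,\gamma)$ and the target weight is $\sqrt{g}=g^{1/2}$, so the local factors behave like $\|A_\bk\|\ll e^{+|\bk+\theta\sign\bk/2|^2/4}$ and $\|B_\bk\|\ll e^{-|\bk-\theta\sign\bk/2|^2/4}$, whose product grows like $e^{c\theta|\bk|_1}$ rather than decaying --- there is no gap left to absorb into $e^{-\delta|\bk|^2}$. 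Moreover, the exponent $-\frac{\alpha}{2}-\frac{1}{12}+\frac12$ is not a known rate for the periodic embedding $\tilde W^\alpha_2(\IId)\to L_\infty(\IId)$ (it is not a ``Bourgain-type gain''): the $1/12$ comes from the pointwise Hermite-function bound $|H_k(x)\sqrt{g(x)}|\ll (1+k)^{-1/12}$, a Gaussian-specific fact. The paper accordingly argues globally: it truncates the Hermite series to the hyperbolic cross $Q_\xi$ with $|Q_\xi|\asymp 2^\xi\xi^{d-1}$ and bounds the tail by Cauchy--Schwarz using that pointwise decay; the hypothesis $\alpha>1-\frac16$ serves precisely to make $\sum_k (1+k)^{-\alpha-1/6}$ converge, not to make a one-cube rate ``summable with geometric weights.''

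For the lower bound, lower-bounding $\sup_\bx |H_\bk(\bx)|\sqrt{g(\bx)}$ for individual $\bk$ cannot give a Bernstein-number bound: $b_n$ requires an infimum of $\|f\sqrt g\|_{L_\infty}/\|f\|_{\Hh^\alpha}$ over \emph{all} $f$ in the chosen subspace, and cancellation among the $H_\bk$ can make the sup-norm of a linear combination far smaller than that of its pieces. The missing tool is the weighted Nikol'skii-type inequality \eqref{eq-Nilkolski}, $\|\varphi\sqrt g\|_{L_2(\RR)}\le C m^{1/4}\|\varphi\sqrt g\|_{L_\infty(\RR)}$ for polynomials of degree $\le m$ (via the Mhaskar--Rakhmanov--Saff number $a_m=\sqrt m$), applied coordinatewise on $L(Q_\xi)$; this converts the $L_\infty^{\sqrt g}$ norm into the $L_2(\RRd,\gamma)$ norm at the cost of $2^{-d\xi/4}$ and yields $b_n\gg 2^{-\alpha\xi/2-d\xi/4}$, hence the claimed rate after substituting $n\asymp 2^\xi\xi^{d-1}$. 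You correctly flag the lower bound as the main obstacle, but the proposal does not supply the idea that closes it.
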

\begin{proof} 
{\bf Step 1.}  
We first show that the embedding  $I_\gamma:{\Hh}^\alpha\to  L_\infty^{\sqrt{g}}(\RRd)$ is continuous. Recall an inequality in \cite{DILP18}
$$
H_\bk(\bx)\sqrt{g(\bx)}\leq \prod_{j=1}^d \min\bigg(1,\frac{\sqrt{\pi}}{k_j^{1/12}}\bigg)\asymp \prod_{j=1}^d\frac{1}{(1+k_j)^{1/12}} ,\ \ \bx\in \RRd.
$$ If $f=\sum_{\bk \in \NNd_0}\hat{f}(\bk)H_\bk \in \Hh^\alpha$, then we have
\begin{equation}\label{eq-Cauchy}
\begin{aligned}
\Bigg\|\sum_{\bk \in \NNd_0}\hat{f}(\bk)H_\bk\Bigg\|_{L_\infty^{\sqrt{g}}(\RRd)}
& 
=
\Bigg\|\sqrt{g(\bx)}\sum_{\bk\in \NNd_0}  \hat{f}(\bk)     H_\bk(\bx)\Bigg\|_{L_\infty(\RRd)}
\\
&
\leq \sum_{\bk\in \NNd_0}  |\hat{f}(\bk)| \sup_{\bx\in \RRd} \big|H_\bk(\bx)\sqrt{g(\bx)}\big|
\\
&
\ll \sum_{\bk\in \NNd_0}  |\hat{f}(\bk)|  \prod_{j=1}^d\frac{1}{(1+k_j)^{1/12}}
\\
&
\leq \Bigg(\sum_{\bk\in \NNd_0} \rho_{\alpha,\bk} |\hat{f}(\bk)|^2\Bigg)^{1/2} \Bigg(\sum_{\bk\in \NNd_0}\frac{1}{\rho_{\alpha,\bk}} \prod_{j=1}^d\frac{1}{(1+k_j)^{1/6}}
\Bigg)^{1/2},
\end{aligned}
\end{equation}
where in the last estimate we used the Cauchy–Schwarz inequality. 
Assumption  $\alpha>1-\frac{1}{6}$ implies that
\begin{align*}
\Bigg(\sum_{\bk\in \NNd_0}\frac{1}{\rho_{\alpha,\bk}} \prod_{j=1}^d\frac{1}{(1+k_j)^{1/6}}
\Bigg)^{1/2} 
&
= \Bigg(\sum_{\bk\in \NNd_0}  \prod_{j=1}^d\frac{1}{(1+k_j)^{1/6+\alpha}}
\Bigg)^{1/2} 
\\
&= \Bigg(    \sum_{k=0}^\infty\frac{1}{(1+k)^{1/6+\alpha}}
\Bigg)^{d/2} <\infty.
\end{align*}
Consequently, we get 
$$
\|f(\bx)\|_{L_\infty^{\sqrt{g}}(\RRd)} \leq C \Bigg(\sum_{\bk\in \NNd_0} \rho_{\alpha,\bk} |\hat{f}(\bk)|^2\Bigg)^{1/2} =C\|f\|_{\Hh^\alpha}
$$
which means the continuous embedding of $\Hh^\alpha$ into $L_\infty^{\sqrt{g}}(\RRd)$.
\\
\noindent
{\bf Step 2.} {\it Upper bound.} Since approximation number is the largest pseudo $s$-number in the set $\{a,b,c,d,e,x\}$, we prove the upper bound for approximation numbers. Let $\bs = (s_1, \ldots, s_d)$ be a vector whose coordinates are nonnegative integers. 
We define
\begin{equation}\label{eq-Qxi}
\begin{aligned}
\delta(\bs)&:=\big\{\bk\in \NNd_0: \lfloor 2^{s_j-1}\rfloor \leq k_j\leq 2^{s_j}, j=1,\ldots,d\big\}
\\
Q_\xi&:=\bigcup_{|\bs|_1\leq \xi}\delta(\bs).
\end{aligned}
\end{equation}
For any $\xi \in \NN$, denote $A_\xi: \Hh^\alpha\to L_\infty^{\sqrt{g}}(\RRd)$  the operator defined by
$$
A_\xi(f):=\sum_{\bk\in Q_\xi} \hat{f}(\bk)H_\bk,\ \ f=\sum_{\bk \in \NNd_0}\hat{f}(\bk)H_\bk \in \Hh^\alpha.
$$
It is well-known that the cardinality  of $Q_\xi$ is equivalent to 
$
 2^\xi \xi^{d-1}
$, see, e.g., \cite[Section 2.3]{DTU18B}, 
which implies that ${\rm rank}(A_\xi)\asymp 2^\xi \xi^{d-1}$. As \eqref{eq-Cauchy} we have
\begin{align*}
\|f-A_\xi(f)\|_{L_\infty^{\sqrt{g}}(\RRd)}
&
=\Bigg\|\sum_{\bk\not \in Q_\xi}\hat{f}(\bk)H_\bk\Bigg\|_{L_\infty^{\sqrt{g}}(\RRd)}
\\
&
\ll  \Bigg(\sum_{\bk\not \in Q_\xi} \rho_{\alpha,\bk} |\hat{f}(\bk)|^2\Bigg)^{1/2} \Bigg(\sum_{\bk\not \in Q_\xi} \prod_{j=1}^d\frac{1}{(1+k_j)^{\alpha+1/6}}
\Bigg)^{1/2}.
\end{align*}
Observe that
\begin{align*}
\sum_{\bk\not \in Q_\xi} \prod_{j=1}^d\frac{1}{(1+k_j)^{\alpha+1/6}}
& = \sum_{m=\xi+1}^\infty \sum_{ |\bs|_1=m}\sum_{\bk\in \delta(\bs)}\prod_{j=1}^d\frac{1}{(1+k_j)^{\alpha+1/6}}
\\
& \ll \sum_{m=\xi+1}^\infty \sum_{|\bs|_1=m} \frac{2^m}{2^{m(\alpha+1/6)}}
\\
&
\ll \sum_{m=\xi+1}^\infty 2^{m(-\alpha-\frac{1}{6}+1)}m^{d-1}
\\
& 
\asymp 2^{\xi(-\alpha-\frac{1}{6}+1)}\xi^{d-1}.
\end{align*}
Therefore
$$
\|f-A_\xi(f)\|_{L_\infty^{\sqrt{g}}(\RRd)} \ll 2^{\xi(-\alpha-\frac{1}{6}+1)}\xi^{d-1} \|f\|_{\Hh^\alpha}
.$$
For $n\in \NN$, $n>2$, we choose $\xi=\xi(n) $ such that 
$
n\asymp 2^\xi \xi^{d-1}
$
which implies 
$
\xi \asymp \log n$ and $ 2^\xi \asymp \frac{n}{(\log n)^{d-1} }$. From this we get
\begin{align*}
	\|f-A_{\xi(n)}(f)\|_{L_\infty^{\sqrt{g}}(\RRd)}
&\ll \bigg(\frac{n }{(\log n)^{d-1}}\bigg)^{-\frac{\alpha}{2}-\frac{1}{12}+\frac{1}{2}} (\log n)^{\frac{d-1}{2}} \|f\|_{\Hh^\alpha}
\\
&
=
n^{-\frac{\alpha}{2}-\frac{1}{12}+\frac{1}{2}}(\log n)^{(d-1)(\frac{\alpha}{2}+\frac{1}{12})} \|f\|_{\Hh^\alpha}.
\end{align*}
By the definition of approximation numbers, we obtain the upper bound. 
\\
\noindent
{\bf Step 3.} {\it Lower bound.}
The inequalities \eqref{eq-inequality1}, \eqref{eq-inequality2} and \eqref{eq-inequality3} induce that Bernstein number is the smallest pseudo $s$-number in the set $\{a,b,c,d,e,x\}$. Therefore, it is enough to prove the lower bound for Bernstein numbers. For $\xi\in \NN$, let  $L(Q_\xi)$ be the subspace of $\Hh^\alpha$ defined by
$$
L(Q_\xi)=\Bigg\{ f=\sum_{\bk\in Q_\xi} a_\bk  H_\bk ,\  a_\bk \in \RR\Bigg \},
$$
where $Q_\xi$ is given in 
\eqref{eq-Qxi}. Denote $n:=n(\xi)={\rm dim}(L(Q_\xi))$. 
By the definition of Bernstein numbers we get
\begin{equation*}
	\begin{aligned}
		b_n(I_\gamma)\geq   \inf_{\substack{f\in L(Q_\xi)
				\\ f\not =0}} \dfrac{\|f\|_{L_\infty^{\sqrt{g}}(\RRd)}}{\| f\|_{\Hh^\alpha}} &=\inf_{\substack{f\in L(Q_\xi)
				\\ f\not =0}}\frac{\big\|\sqrt{g}\sum_{\bk \in Q_\xi}a_\bk H_\bk\big\|_{L_\infty(\RRd)}}{\big(\sum_{\bk \in Q_\xi} \rho_{\alpha,\bk}|a_\bk |^2\big)^{1/2}}.
	\end{aligned}
\end{equation*}

Denote by $a_m$ the $m$th Mhaskar-Rakhmanov-Saff number corresponding to $\sqrt{g}$. See \cite[Page 11]{Lu07} for a definition of this number. From \cite[Page 11]{Lu07}  we have  
\begin{equation*} \label{a_m(g)}
	a_m 
	\ = \
	\sqrt{m}.
\end{equation*}
For any polynomial $\varphi$ of degree $\le m$ we have the Nikol'skii-type inequality \cite[Theorem 9.1, Page 61]{Lu07} 
	\begin{equation} \label{eq-Nilkolski}
		\|\varphi \sqrt{g} \|_{L_2( \RR)}
		\ \le \
		C  a_m^{\frac{1}{2}}\, \|\varphi \sqrt{g} \|_{L_\infty( \RR)}\leq 	C  m^{\frac{1}{4}}\, \|\varphi \sqrt{g} \|_{L_\infty( \RR)}.
	\end{equation}
Observe that, if $f\in L(Q_\xi)$ then $f$ is a polynomial of degree $\leq$ $2^\xi$ with respect to each variable $x_j$, $j=1,\ldots,d$. 
Applying the inequality \eqref{eq-Nilkolski} continuously with respect to $x_j$, $j=1,\ldots,d$,  we get	
$$
2^{\frac{d\xi}{4}}\Bigg\|\sqrt{g}\sum_{\bk \in Q_\xi}a_\bk H_\bk\Bigg\|_{L_\infty(\RRd)} \gg 2^{\frac{d\xi}{4}}\Bigg\|\sqrt{g}\sum_{\bk \in Q_\xi}a_\bk H_\bk\Bigg\|_{L_2(\RRd)}=2^{\frac{d\xi}{4}}\Bigg\|\sum_{\bk \in Q_\xi}a_\bk H_\bk\Bigg\|_{L_2(\RRd,\gamma)}\,.
$$
Consequently, we find
\begin{equation*}
	\begin{aligned}
 b_n(I_\gamma)   
	&
\gg \inf_{(a_\bk)_{\bk\in Q_\xi}} \frac{2^{-\frac{d\xi}{4}}\big\|\sum_{\bk \in Q_\xi}a_\bk H_\bk\big\|_{L_2(\RRd,\gamma)}}{\big(\sum_{\bk \in Q_\xi} \rho_{\alpha,\bk}|a_\bk |^2\big)^{1/2}} 
\\
&
= \inf_{(a_\bk)_{\bk\in Q_\xi}}\frac{2^{-\frac{\xi d}{4}}\big(\sum_{\bk \in Q_\xi} |a_\bk |^2\big)^{1/2}}{2^{\frac{\alpha}{2}\xi}\big(\sum_{\bk \in Q_\xi} \frac{\rho_{\alpha,\bk}}{2^{\alpha\xi }}|a_\bk |^2\big)^{1/2}}\,.
	\end{aligned}
\end{equation*}
Since $\rho_{\alpha,\bk}\ll 2^{\alpha \xi}$ for $\bk\in Q_\xi$, we obtain
$$
b_n(I_\gamma)\gg 2^{-\frac{\alpha\xi}{2}-\frac{d\xi }{4}}.
$$
Finally from the equivalence $n\asymp 2^\xi \xi^{d-1}$ we get 
$$
b_n(I_\gamma) \gg n^{-\frac{\alpha}{2}-\frac{d}{4}} (\log n)^{(\frac{\alpha}{2}+\frac{d}{4})(d-1)}.
$$
The proof is competed.
\hfill
\end{proof}

\bibliographystyle{abbrv}

\bibliography{AllBib}
\end{document}